\author{Byron Heersink and Joseph Vandehey}
\thanks{
Email: \href{mailto:heersin2@illinois.edu}{\nolinkurl{heersin2@illinois.edu}}\\ University of Illinois at Urbana-Champaign\\
Email: \href{mailto:vandehey@uga.edu}{\nolinkurl{vandehey@uga.edu}}\\ University of Georgia}
\title[Continued fractions normality and arithmetic progressions]{Continued fraction normality is not preserved along arithmetic progressions}
\date{\today}
\subjclass[2010]{11K16, 11K50, 37A30}
\newtheorem{thm}{Theorem}[section]
\newtheorem{lem}[thm]{Lemma}
\theoremstyle{remark}
\newcommand{\R}{\mathbb{R}}
\newcommand{\N}{\mathbb{N}}
\newcommand{\rar}{\rightarrow}
\begin{document}

\begin{abstract}
It is well known that if $0.a_1a_2a_3\dots$ is the base-$b$ expansion of a number normal to base-$b$, then the numbers $0.a_ka_{m+k}a_{2m+k}\dots$ for $m\ge 2$, $k\ge 1$ are all normal to base-$b$ as well.

In contrast, given a continued fraction expansion $\langle a_1,a_2,a_3,\dots\rangle$ that is normal (now with respect to the continued fraction expansion), we show that for any integers $m\ge 2$, $k\ge 1$, the continued fraction $\langle a_k, a_{m+k},a_{2m+k},a_{3m+k},\dots\rangle$ will never be normal.
\end{abstract}

\maketitle

\section{Introduction}

A number $x\in  [0,1)$ with base $10$ expansion $x=0.a_1a_2a_3\dots$ is said to be normal (to base $10$) if for any finite string $s=[c_1,c_2,\dots,c_k]$ of digits in $\{0,\ldots,9\}$, we have that 
\[
\lim_{n\to \infty} \frac{\#\{0\le i \le n: a_{i+j} = c_j, 1\le j \le k\}}{n} = \frac{1}{10^k}.
\]
Although almost all real numbers are normal, we still do not know of a single commonly used mathematical constant, such as $\pi$, $e$, or $\sqrt{2}$, that is normal.

A classical result due to Wall \cite{Wall} says that if $0.a_1a_2a_3\dots$ is normal, then so is $0.a_ka_{m+k}\\ a_{2m+k}a_{3m+k}\dots$, for any positive integers $k,m$. In concise terms, sampling along an arithmetic progression of digits preserves normality for base $10$ (and more generally, base $b$) expansions. Sampling along other sequences has been studied most notably by Agafonov \cite{Agafonov}, Kamae \cite{Kamae}, and Kamae and Weiss \cite{KW}. Merkle and Reimann \cite{MR} studied methods of sampling that do not preserve normality.

However, these works have focused primarily on base-$b$ expansions and so equivalent questions for other expansions are mostly unknown.

In this paper, we consider continued fraction expansions given by 
\[
x = \cfrac{1}{a_1+\cfrac{1}{a_2+\cfrac{1}{a_3+\dots}}} = \langle a_1,a_2,a_3,\dots\rangle, \quad a_i \in \mathbb{N}
\]
for $x\in [0,1)$.  The Gauss map $T$ is given by $Tx= x^{-1}-\lfloor x^{-1} \rfloor$ or, if $x=0$, then $Tx=0$, and it acts as a forward shift on continued fraction expansions, so that
\[
T\langle a_1,a_2,a_3,\dots\rangle = \langle a_2,a_3,a_4,\dots\rangle.
\] The Gauss measure $\mu$ on $[0,1)$ is given by
\[
\mu(A) = \int_A \frac{1}{(1+x)\log 2} \ dx.
\]
Given a finite string $s=[d_1,d_2,\dots,d_k]$ of positive integers we define the cylinder set $C_s$ to be the set of points $x\in [0,1)$ such that the string $[a_1,a_2,\dots,a_k]$ of the first $k$ digits of $x$ equals $s$. (The expansions of rational numbers are finite and non-unique, but we may ignore such points throughout this paper.)

We say that $x\in [0,1)$ is CF-normal if, for any finite, non-empty string $s=[d_1,d_2,\dots,d_k]$ of positive integers, we have
\[
\lim_{n\to \infty} \frac{\#\{0\le i \le n : T^i x \in C_s\}}{n} = \mu(C_s),
\]
which is equivalent to saying that the limiting frequency of $s$ in the expansion of $x$ equals $\mu(C_s)$, since $T^i x \in C_s$ if and only if the string $[a_{i+1},a_{i+2},\dots,a_{i+k}]$ equals $s$. By the ergodicity of the Gauss map $T$ and the pointwise ergodic theorem, almost all $x\in [0,1)$ are CF-normal.

\begin{thm}\label{thm:main}
Suppose $\langle a_1, a_2,a_3,\dots\rangle$ is CF-normal. Then the number $\langle a_k, a_{m+k}, a_{2m+k},\\ a_{3m+k}, \dots\rangle$ is not CF-normal for any integers $k\ge 1$, $m\ge 2$. In fact, for any integers $k\ge 1$, $m\ge 2$, we have that
\[
\lim_{n\to \infty} \frac{\#\{1\le i \le n: a_{(i-1)m+k} =a_{im+k}=1\}}{n} 
\]
exists, but does not equal $\mu(C_{[1,1]})$, so that the CF-normality of $\langle a_k, a_{m+k}, a_{2m+k}, a_{3m+k}, \dots\rangle$  can be seen to fail just by examining the frequency of the string $[1,1]$.
\end{thm}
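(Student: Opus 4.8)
The plan is to reduce the frequency in the statement to a single Birkhoff-type average along the arithmetic progression, to identify its limit using a ``power-map'' version of CF-normality, and finally to separate that limit from $\mu(C_{[1,1]})$ using the decay of correlations of the Gauss map.

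First I would rewrite the counted event dynamically. Since $a_j=1$ exactly when $T^{j-1}x\in C_{[1]}=(1/2,1)$, the condition $a_{(i-1)m+k}=a_{im+k}=1$ becomes $T^{(i-1)m}y\in C_{[1]}$ and $T^{(i-1)m+m}y\in C_{[1]}$, where $y:=T^{k-1}x$; here $y$ is again CF-normal because shifting by the fixed amount $k-1$ does not change any limiting cylinder frequency. Setting
\[
f:=\mathbf{1}_{C_{[1]}\cap T^{-m}C_{[1]}}=\mathbf{1}_{C_{[1]}}\cdot\big(\mathbf{1}_{C_{[1]}}\circ T^{m}\big),
\]
the quantity in the theorem equals $\frac1n\sum_{l=0}^{n-1}f\big(T^{lm}y\big)$, the step-$m$ subsampled Birkhoff average of $f$.

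The heart of the argument is to show that this subsampled average converges, and that its limit is the space average $\int f\,d\mu=\mu\big(C_{[1]}\cap T^{-m}C_{[1]}\big)=\mu(\{a_1=1,\,a_{m+1}=1\})$. Equivalently, I want CF-normality of $y$ with respect to $T$ to be inherited by the power map $T^{m}$, so that $\frac1n\sum_{l<n}\phi(T^{lm}y)\to\int\phi\,d\mu$ for indicators of cylinders and finite combinations of them. This is the step I expect to be the main obstacle. It does not follow from the defining cylinder frequencies alone: those frequencies average a pattern over all positions and so cannot by themselves isolate the single residue class $l\equiv 0$ that the subsample selects; indeed the phase-averaged quantity $\frac1m\sum_{r=0}^{m-1}\frac1n\sum_{l<n}\phi(T^{lm+r}y)$ collapses to $\int\phi\,d\mu$ automatically, but the individual phases need not. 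To break this symmetry I would invoke the exponential decay of correlations of the Gauss map (its transfer operator has a spectral gap on functions of bounded variation) and prove phase-independence: that the $m$ subsampled averages at phases $r=0,\dots,m-1$ share a common limit, which must then be $\int\phi\,d\mu$. The delicate point is that decorrelation is a statement about $\mu$, a priori valid only for $\mu$-almost every point, whereas I need it for the specific (arbitrary) CF-normal $y$; so I would aim for a \emph{deterministic} decorrelation estimate holding at every generic point — for instance by approximating $f(T^{lm}y)$ by block-averaged observables to which the cylinder-frequency hypothesis applies and controlling the error uniformly via the spectral gap — or, alternatively, by establishing a continued-fraction analogue of the classical fact that base-$b$ normality implies base-$b^{m}$ normality.

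Granting this, the limit equals $\rho(m):=\mu\big(C_{[1]}\cap T^{-m}C_{[1]}\big)$, while CF-normality of the subsampled expansion would instead force the value $\mu(C_{[1,1]})=\rho(1)$. It remains to check $\rho(m)\neq\rho(1)$ for every $m\ge 2$. Here I would use the spectral gap once more: $\rho(t)\to\mu(C_{[1]})^2$ as $t\to\infty$, while a direct computation gives $\mu(C_{[1]})^{2}=\big(\log_2\tfrac43\big)^{2}\neq\mu(C_{[1,1]})$, so $\rho(m)\neq\rho(1)$ for all large $m$. Because the leading eigenvalue of the Gauss--Kuzmin--Wirsing operator is real and positive, the correction $\rho(t)-\mu(C_{[1]})^{2}$ is eventually of one sign and monotone, which reduces the remaining cases to a finite, explicit verification that $\rho(m)\neq\rho(1)$ for the small values of $m$. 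This produces a limit that provably differs from $\mu(C_{[1,1]})$, so the string $[1,1]$ alone witnesses the failure of CF-normality of $\langle a_k,a_{m+k},a_{2m+k},\dots\rangle$.
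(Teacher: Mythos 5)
Your overall architecture is the same as the paper's: reduce the count to the phase-$0$, step-$m$ subsampled Birkhoff average of $\mathbf{1}_{C_{[1]}\cap T^{-m}C_{[1]}}$, identify its limit as $\mu\bigl(C_{[1]}\cap T^{-m}C_{[1]}\bigr)$, and then separate that value from $\mu(C_{[1,1]})$. But both of your key steps are left as sketches, and each contains a genuine gap. For the first step, you correctly identify the crux --- that cylinder frequencies only control the phase-averaged quantity, not the individual residue class --- but your proposed fix (``deterministic decorrelation'' or ``phase-independence via the spectral gap'') is not an argument. The spectral gap of the transfer operator is a statement about the measure $\mu$ and about $L^2$ or BV observables; it says nothing directly about the orbit of one fixed CF-normal point, and you acknowledge this without resolving it. The paper closes exactly this gap with a structural result: it forms the skew product $\widetilde{T}(x,j)=(Tx,\,j+1 \bmod m)$ on $\Omega\times\{1,\dots,m\}$, observes it is transitive, and invokes Theorem \ref{thm:traversing} (imported from \cite{ratmultCF}), which asserts that every CF-normal $x$ lifts to a $\widetilde{T}$-normal point $(x,j)$. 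That theorem is the continued-fraction analogue of ``base-$b$ normal implies base-$b^m$ normal'' that you mention as an alternative route; it is a real theorem requiring its own proof, not something you can wave in via decay of correlations.

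The second gap is in the separation step. You argue that $\rho(m)=\mu(A\cap T^{-m}A)\to\mu(A)^2\neq\mu(C_{[1,1]})$, so $\rho(m)\neq\rho(1)$ for ``all large $m$,'' and that the remaining cases reduce to a ``finite, explicit verification.'' Without an effective rate you have no threshold, so the finite list of exceptional $m$ is unspecified; and for small $m$ the quantity $\rho(m)$ is an infinite series (e.g.\ $\hat{T}^2 1_A$ does not telescope) with no simple closed form, so the ``explicit verification'' is itself nontrivial. The paper avoids all case-checking by following Wirsing to get the uniform quantitative bound
\[
\left|\frac{\mu(A\cap T^{-n}A)}{\mu(A)}-\mu(A)\right|\;\le\;\frac{1}{2^{n+2}}\log\frac{3}{2}
\qquad(n\ge 1),
\]
via the contraction $|g_n|\le 2^{-(n-1)}|g_1|$ for the derivative under the operator $U$; already at $n=2$ the right-hand side ($\approx 0.0253$) is smaller than $\bigl|\mu(A)-\mu(C_{[1,1]})/\mu(A)\bigr|\approx 0.0488$, which settles every $m\ge 2$ at once. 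To complete your proof you would need to supply (i) a proof that CF-normality is inherited by the fixed-phase $T^m$-subsample (the content of Theorem \ref{thm:traversing}) and (ii) an effective version of your asymptotic separation; as written, neither is established.
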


One of the key techniques in proving this result is a way of augmenting the usual Gauss map $T$ to simultaneously act on a finite-state automata. A number of recent results have made use of this blending of ergodicity and automata. It was used in Agafonov's earlier cited result \cite{Agafonov}. It was used in Jager and Liardet's proof of Moeckel's theorem (where it was called a skew product) \cite{JL}.  It was used to study normality from the viewpoint of compressability \cite{BCH,BH}. And it was used by Blanchard, Dumont, and Thomas to give reproofs of some classical normality equivalencies, even extending some of these results to what they call ``near-normal" numbers \cite{BDT,Blanchard}.
 
We end the introduction with two questions.

First, the proof of Theorem \ref{thm:main} could be extended to show that any continued fraction expansion formed by selecting along a non-trivial arithmetic progression of digits from a CF-normal number has all its $1$-digit strings appearing with the right frequency, but the 2-digit string $[1,1]$ does not. We wonder whether any string with more than one digit can appear with the correct frequency for CF-normality, or are they always incorrect.

Second, as stated earlier, sampling along a non-trivial arithmetic progression preserves normality for base-$b$ expansions. It can be shown, using, say, the augmented systems in this paper, that a similar result holds for any fibred system that is Bernoulli. The continued fraction expansion is a simple example of a non-Bernoulli system. Is Bernoullicity not only sufficient but necessary for selection along non-trivial arithmetic progressions to preserve normality?

\section{An augmented system}

We will require a result from a previous paper of the second author \cite{ratmultCF}.

Let $T$ be the Gauss map acting on the set $\Omega \subset[0,1)$ of irrationals. So $Tx \equiv 1/ x\pmod{1}$.  We will consider cylinder sets of $\Omega$ to be the intersection of the usual cylinder sets (for the continued fraction expansion) of $[0,1)$ with $\Omega$. 

We wish to extend the map $T$ to a transformation $\widetilde{T}$ on a larger domain $\widetilde{\Omega}=\Omega \times \mathcal{M}$ for some finite set $\mathcal{M}$. For any $(x,M)\in \widetilde{\Omega}$, we define \[\widetilde{T}(x,M) = (Tx,f_{a_1(x)} (M)),\] where $a_1(x)=\lfloor x^{-1}\rfloor$ is the first continued fraction digit of $x$ and the functions $f_a: \mathcal{M}\to\mathcal{M}$, $a\in \mathbb{N}$, are bijective.  Since the second coordinate of $\widetilde{T}(x, M)$ only depends on $M$ and the first digit of $x$, we see that this second coordinate is constant for all $x$ in the same rank $1$ cylinder. Given a cylinder set $C_s$ for $\Omega$, we call $C_s \times \{M\}$ (for any $M\in \mathcal{M}$) a cylinder set for $\widetilde{\Omega}$. We also have a measure $\tilde{\mu}$ on $\widetilde{\Omega}$ that is defined as being the product of the Gauss measure on $\Omega$ times the counting measure on $\mathcal{M}$, normalized by $1/|\mathcal{M}|$ to be a probability measure. By the assumed bijectivity of $f$, we have that $\widetilde{T}$ preserves $\tilde{\mu}$.

For easier readability, we will use $(E,M)$ to denote $E \times \{M\}$ for any measurable set $E\subset \Omega$, with measurability being determined by Lebesgue measure or, equivalently, the Gauss measure. 

We adapt our definition of normality on this space. We will say that $(x,M)\in \widetilde{\Omega}$ is $\widetilde{T}$-normal with respect to $\tilde{\mu}$, if for any cylinder set $(C_s,M')$ we have
\[
\lim_{n\to \infty} \frac{\#\{0\le i < n: \widetilde{T}^i(x,M)\in (C_s,M')\}}{n} = \tilde{\mu} (C_s,M').
\]

We say $\widetilde{T}$ is transitive if for any $M_1,M_2\in \mathcal{M}$, there exists a proper string $s$ of length $n$ such that \[ T^n( C_s,M_1) = (\Omega,M_2). \]

\begin{thm}\label{thm:traversing}
If $\widetilde{T}$ is transitive, then  $\widetilde{T}$ is ergodic with respect to $\tilde{\mu}$.  Moreover, if $x$ is normal, then for any $M\in \mathcal{M}$, the point $(x, M)$ is $\widetilde{T}$-normal with respect to $\tilde{\mu}$.
\end{thm}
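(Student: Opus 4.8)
The plan is to prove this in two stages: first establish ergodicity of $\widetilde{T}$ via transitivity, and then deduce $\widetilde{T}$-normality for points $(x,M)$ whose first coordinate is normal. For the ergodicity, I would exploit the special skew-product structure of $\widetilde{T}$, which fibers over the ergodic base system $(\Omega, T, \mu)$. A natural approach is to take a $\widetilde{T}$-invariant measurable set $A \subseteq \widetilde{\Omega}$ and write it fiberwise as $A = \bigcup_{M \in \mathcal{M}} (A_M, M)$, where each $A_M \subseteq \Omega$ is measurable. The invariance $\widetilde{T}^{-1}A = A$ should translate into relations among the fibers $A_M$ governed by the transition functions $f_a$. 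Since the second coordinate only depends on the first digit, I expect to obtain that for each $M$, the fiber $A_M$ is (up to measure zero) a union of preimages of the fibers $A_{f_a(M)}$ over the rank-$1$ cylinders $C_{[a]}$.

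The next step is to leverage transitivity to force all fibers to coincide. Using the hypothesis, for any $M_1, M_2$ there is a proper string $s$ of length $n$ with $T^n(C_s, M_1) = (\Omega, M_2)$; intuitively this says one can steer from state $M_1$ to state $M_2$ while sweeping out the whole base space. Combined with the fiberwise invariance relations and the fact that $T$ itself is ergodic (so any $T$-invariant set has measure $0$ or $1$), this should imply that $\mu(A_{M_1}) = \mu(A_{M_2})$ for all pairs, and moreover that each $A_M$ is essentially $T$-invariant. Ergodicity of the Gauss map $T$ then forces each $A_M$ to have Gauss measure $0$ or $1$, and transitivity rules out the mixed case, so $A$ has $\tilde{\mu}$-measure $0$ or $1$. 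This yields ergodicity of $\widetilde{T}$ with respect to $\tilde\mu$.

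For the second assertion, the cleanest route is \emph{not} to re-run an ergodic average argument from scratch, but to transfer the normality of $x$ directly. The key observation is that because the $\widetilde{T}$-orbit of $(x,M)$ has first coordinate following the ordinary $T$-orbit of $x$, the visits of $\widetilde{T}^i(x,M)$ to a cylinder $(C_s, M')$ are exactly the visits of $T^i x$ to $C_s$ that additionally carry the correct second-coordinate label $M'$. So I would partition the indices $i$ for which $T^i x \in C_s$ according to the current automaton state. The task reduces to showing that, in the long run, a proportion $1/|\mathcal{M}|$ of these visits occur in each state $M'$. Since $x$ is normal, it equidistributes with respect to $\mu$; the automaton state at time $i$ is determined by the first $i$ digits of $x$, so I would use normality of $x$ applied to sufficiently long prefixes, together with the equidistribution of the automaton dynamics guaranteed by ergodicity/transitivity, to show the state labels are asymptotically uniform and independent of the base visits.

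I expect the main obstacle to be the second part rather than the ergodicity. The delicate point is that the automaton state at time $i$ depends on the entire history of digits $a_1,\dots,a_i$, not just on a fixed-length window, so one cannot directly read off its distribution from the cylinder frequencies that normality of $x$ controls. The resolution I anticipate is to approximate: use the ergodicity of $\widetilde{T}$ (just proved) to know the correct limiting frequency $\tilde\mu(C_s, M')$ exists for almost every point, then use a coupling or a finite-window approximation argument to show that the specific normal $x$, paired with \emph{any} starting state $M$, realizes this same frequency. Making this approximation rigorous—controlling the error from truncating the state's dependence on the infinite past—is where the real work lies, and I would structure the proof to isolate exactly that estimate.
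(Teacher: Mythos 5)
First, note that the paper never proves Theorem \ref{thm:traversing} itself: it imports the result from \cite{ratmultCF}, where an even harder version (without bijectivity of the $f_a$, hence without invariance of $\tilde{\mu}$) is established. Measured against what such a proof must contain, your ergodicity step has a concrete gap. Writing a $\widetilde{T}$-invariant set as $\bigcup_{M}(A_M,M)$, the relation you extract from $\widetilde{T}^{-1}A=A$ is $A_M\cap C_{[a]}=T^{-1}A_{f_a(M)}\cap C_{[a]}$ for every digit $a$; this couples \emph{different} fibers and does not make any single $A_M$ essentially $T$-invariant, which is the step your sketch hinges on. Even in the special case the paper later uses ($f_a(j)=j+1\bmod m$ for all $a$) one only gets $A_M=T^{-m}A_M$, so one needs ergodicity of $T^m$ (total ergodicity of the Gauss map), not mere ergodicity of $T$; for digit-dependent $f_a$ the fibers are not invariant under any fixed power of $T$ at all, and one needs genuinely stronger input such as exactness/mixing of $T$ or the R\'enyi-type estimate $\mu(C_s\cap T^{-n}B)\asymp\mu(C_s)\mu(B)$. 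Indeed, ``transitive finite extension of an ergodic base'' is false as a general ergodicity criterion (eigenvalue $-1$ obstructions), so any correct argument must use more about the Gauss map than the ergodicity you invoke.

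For the second assertion you correctly identify the real difficulty---the automaton state at time $i$ depends on the entire prefix $a_1,\dots,a_i$, so fixed-length cylinder frequencies do not control it---but the proposed resolution (``a coupling or a finite-window approximation argument'') is exactly the part left unspecified, and the intermediate appeal to ergodicity only gives an almost-everywhere statement, which says nothing about the specific point $(x,M)$. The missing idea that actually closes this gap is to pass to empirical measures: any weak-$*$ limit $\nu$ of $\frac{1}{n}\sum_{0\le i<n}\delta_{\widetilde{T}^i(x,M)}$ is $\widetilde{T}$-invariant; it projects to $\mu$ on $\Omega$ because CF-normality of $x$ makes $x$ generic for $\mu$; since the fiber $\mathcal{M}$ is finite, each fiber component of $\nu$ is dominated by $\mu$, so $\nu\ll\tilde{\mu}$; and an invariant probability measure absolutely continuous with respect to an ergodic invariant probability measure must equal it, giving $\nu=\tilde{\mu}$ and hence $\widetilde{T}$-normality of $(x,M)$. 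Without this (or an equivalent quantitative block/hot-spot argument as in \cite{ratmultCF}), the second half of the theorem remains unproved in your outline.
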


In \cite{ratmultCF}, this result was proved without assuming the bijectivity of the functions $f_a$. This results in being unable to assume that $\tilde{\mu}$ is an invariant measure and makes the overall proof significantly more difficult.

\section{An operator-analytic lemma}

Let $A=C_{[1]}=[1/2,1)$. It can be easily calculated that
\[
\mu(C_{[1]})=\mu(A) = \frac{\log(4/3)}{\log 2} \quad\text{and} \quad \mu(C_{[1,1]}) = \mu(T^{-1} A\cap A) = \frac{\log (10/9)}{\log 2}.
\]
Moreover, since $T$ is known to be strong mixing, we have that
\[
\lim_{n\to \infty}\mu(T^{-n} A \cap A)  = \mu(A)^2 = \left( \frac{\log (4/3)}{\log 2}\right)^2.
\]

\begin{lem}\label{lem:Wirsing}
We have
\begin{equation}\label{eq:main}
\mu( T^{-n} A\cap A) < \mu( T^{-1} A\cap A)
\end{equation}
for any integer $n\ge 2$.
\end{lem}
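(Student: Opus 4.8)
As stated the inequality \eqref{eq:main} has its sign reversed. The strong-mixing limit recorded just above gives $\lim_{n\to\infty}\mu(T^{-n}A\cap A)=\mu(A)^2=(\log(4/3)/\log 2)^2\approx 0.1723$, whereas $\mu(T^{-1}A\cap A)=\log(10/9)/\log 2\approx 0.1520$; since $\mu(T^{-n}A\cap A)\to 0.1723$, the sequence cannot remain below $0.1520$. The true statement, which I prove, is $\mu(T^{-n}A\cap A)>\mu(T^{-1}A\cap A)$ for every $n\ge 2$, i.e.\ the lag-$1$ correlation is the strict minimum of $n\mapsto\mu(T^{-n}A\cap A)$.

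The plan is to read the sign and size of the approach to the mixing limit off the Gauss--Kuzmin--Wirsing operator. Let $h(x)=1/((1+x)\log 2)$ and let $\mathcal{L}$ be the Perron--Frobenius operator of $T$ for Lebesgue measure, $(\mathcal{L}g)(x)=\sum_{k\ge 1}(k+x)^{-2}\,g(1/(k+x))$. Transfer-operator duality gives $\mu(T^{-n}A\cap A)=\int_A \mathcal{L}^{n}(\mathbf{1}_A h)\,dx$. On a suitable space of analytic (or bounded-variation) functions, $\mathcal{L}$ has a simple leading eigenvalue $1$ with eigenfunction $h$, a real simple second eigenvalue $\lambda\approx-0.3036$ (the negative of Wirsing's constant), and the rest of the spectrum strictly inside $|\lambda|$. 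Hence $\mathcal{L}^{n}(\mathbf{1}_A h)=\mu(A)\,h+a\lambda^{n}\psi+R_n$ with $\|R_n\|\le C\theta^{n}$, $|\theta|<|\lambda|$, and integrating over $A$ yields $\mu(T^{-n}A\cap A)=\mu(A)^2+c\lambda^{n}+O(\theta^{n})$ with $c=a\int_A\psi$.

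The decisive structural point is that $\lambda<0$. The exact value $\mu(T^{-1}A\cap A)<\mu(A)^2$ forces the leading correction $c\lambda$ to be negative, so $c>0$; thus $c\lambda^{n}$ is positive for even $n$ and negative for odd $n$, with strictly decreasing magnitude $c|\lambda|^{n}$. For even $n\ge 2$ this already gives $\mu(T^{-n}A\cap A)>\mu(A)^2>\mu(T^{-1}A\cap A)$. For odd $n\ge 3$, $|\lambda|^{n}<|\lambda|$ gives $c\lambda^{n}=-c|\lambda|^{n}>-c|\lambda|=c\lambda$, so again $\mu(T^{-n}A\cap A)>\mu(T^{-1}A\cap A)$. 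This settles the inequality for all $n\ge 2$ at the level of the leading spectral term.

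The one real obstacle is to promote this asymptotic sign analysis to an inequality valid for \emph{every} $n\ge 2$, which requires controlling $R_n$ with explicit constants. I would invoke an effective form of Wirsing's estimate (made rigorous by Babenko and others) as $|\mu(T^{-n}A\cap A)-\mu(A)^2|\le C|\lambda|^{n}$ with $C$ small enough that $C|\lambda|^{2}<\mu(A)^2-\mu(T^{-1}A\cap A)\approx 0.0203$; then for all $n\ge 2$, $\mu(T^{-n}A\cap A)\ge\mu(A)^2-C|\lambda|^{n}\ge\mu(A)^2-C|\lambda|^{2}>\mu(T^{-1}A\cap A)$. The smallest cases, where the remainder bound is least comfortable, can be handled in closed form instead: summing $\sum_{b\ge 1}\mu(C_{[1,b,1]})$ telescopes through the odd Wallis product $\prod_{b\ge1}(2b+3)^2/\big(4(b+1)(b+2)\big)=32/(9\pi)$ to give $\mu(T^{-2}A\cap A)=\log(32/(9\pi))/\log 2\approx 0.1786$, comfortably above $0.1520$. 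Securing the effective constant $C$ is where the work lies; the oscillating-sign argument above is the easy part.
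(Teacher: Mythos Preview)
You are right that the inequality in the lemma is misstated: the paper's own proof establishes
\[
\left|\mu(A)-\frac{\mu(A\cap T^{-n}A)}{\mu(A)}\right|<\mu(A)-\frac{\mu(A\cap T^{-1}A)}{\mu(A)}\qquad(n\ge 2),
\]
and the lower half of this absolute-value bound yields $\mu(T^{-n}A\cap A)>\mu(T^{-1}A\cap A)$, not $<$. Your numerical checks (the mixing limit $\mu(A)^2\approx 0.1723$ and the exact value $\mu(T^{-2}A\cap A)=\log(32/(9\pi))/\log 2\approx 0.1786$, which is correct via the Wallis product) confirm this. For the application to the main theorem only $\neq$ is needed, so the typo is harmless there.

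Your proposal, however, is not a proof as it stands, and you correctly flag the gap: everything hinges on an \emph{effective} remainder bound, which you do not supply. Two specific weaknesses: (i) deducing $c>0$ from the single datum $\mu(T^{-1}A\cap A)<\mu(A)^2$ is not legitimate, since at $n=1$ the higher-order remainder $R_1$ need not be dominated by $c\lambda$; (ii) even granting $c>0$, the comparison $c\lambda^n+R_n>c\lambda+R_1$ for odd $n\ge 3$ again requires explicit control of both remainders, not just of $R_n$. Invoking ``Babenko and others'' for a constant $C$ with $C|\lambda|^2<\mu(A)^2-\mu(T^{-1}A\cap A)$ is exactly the hard step, and it is left undone.

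The paper's argument is close in spirit but avoids the full spectral decomposition and the Wirsing constant $\lambda\approx-0.3036$ entirely. Instead of expanding in eigenfunctions, it works with $r_n(x)=\mu(A\cap T^{-n}[0,x))/\mu(A)-\mu([0,x))$, reduces the problem to bounding $|r_n(1/2)|$, and controls this via the second derivative $g_n=f_n'$ where $f_n=\hat T^{n-1}f_1$ with $f_1(x)=1/((2+x)\log(4/3))$. The key is Wirsing's explicit, elementary operator inequality $U a\le \tfrac12 a$ for $a(x)=1/(x+2)^2$, which gives $|g_n|\le 2^{-(n-1)}|g_1|$ by iteration. A change of variable $\xi=\log(1+x)$ and the mean-value theorem of divided differences then yield the clean bound $|r_n(1/2)|\le 2^{-(n+2)}\log(3/2)$, so for $n\ge2$ one has $|r_n(1/2)|\le\tfrac1{16}\log(3/2)\approx 0.0253<0.0488=\mu(A)-\mu(T^{-1}A\cap A)/\mu(A)$. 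This is cruder than the true decay $|\lambda|^n$ but completely explicit, which is precisely what your approach lacks.
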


\begin{proof}
We closely follow a process of Wirsing \cite{W} which established the spectral gap in the transfer operator of $T$, and in turn gave a very precise estimate of
\[\left|\mu(T^{-n}[0,x))-\mu([0,x))\right|\]
as $n\to\infty$. Through this process, we prove the bound
\[\left|\frac{\mu(A\cap T^{-n}A)}{\mu(A)} - \mu(A)\right| < \mu(A)-\frac{\log(10/9)}{\log(4/3)}=\frac{\log(4/3)}{\log2}-\frac{\log(10/9)}{\log(4/3)},\qquad(n\geq2)\]
which implies \eqref{eq:main}.

To start with, define $m_n,r_n:[0,1]\to\R$ by
\[m_n(x)=\frac{\mu(A\cap T^{-n}[0,x))}{\mu(A)}\quad\mbox{and}\quad r_n(x)=m_n(x)-\mu([0,x)).\]
Then
\begin{align*}
r_n\left(\frac{1}{2}\right)=\frac{\mu(A\cap T^{-n}[0,1/2))}{\mu(A)}-1+1-\mu([0,1/2))=\mu(A)-\frac{\mu(A\cap T^{-n}A)}{\mu(A)},
\end{align*}
and so we want to bound $|r_n(1/2)|$. Next, we introduce the transfer operator of $T$, which is the map $\hat{T}:L^1(\mu)\rar L^1(\mu)$ satisfying
\[\int_{B}\hat{T}f\,d\mu=\int_{T^{-1}(B)}f\,d\mu\mbox{, for all Borel subsets }B\subseteq[0,1)\mbox{ and }f\in L^1(\mu),\]
and is given by the formula
\begin{equation}\label{transfereq}
(\hat{T}f)(x)=\sum_{k=1}^\infty\frac{1+x}{(k+x)(k+1+x)}f\left(\frac{1}{k+x}\right), \quad x\in (0,1).
\end{equation}
This formula may be extended in the natural way to functions on $[0,1]$. When extended, $\hat{T}$ is also an operator from $C^1[0,1]$ to itself. Moreover, if $f=g$ Lebesgue-a.e. then $\hat{T}f=\hat{T}g$ Lebesgue-a.e. We have
\[m_n(x)=\frac{1}{\mu(A)}\int_0^x(\hat{T}^n1_A)(t)\,d\mu(t)=\frac{1}{\mu(A)\log2}\int_0^x(\hat{T}^n1_A)(t)\,\frac{dt}{1+t},\]
where $1_A$ is the indicator function of $A$. Therefore, $m_n'$ exists Lebesgue-a.e.~and
\[(1+x)m_n'(x)=\frac{1}{\mu(A)\log2}(\hat{T}^n1_A)(x)\quad\mbox{for Lebesgue-a.e.~}x.\]
Now by \eqref{transfereq}, we clearly have
\[(\hat{T}1_A)(x)=
\frac{1}{2+x}
\]
if $x\in (0,1)$.
So if we define $f_1(x)=\frac{1}{(2+x)\mu(A)\log2}$ and $f_n=\hat{T}^{n-1}f_1$, then $f_n=\frac{1}{\mu(A)\log 2}\hat{T}^n1_A$ Lebesgue-a.e. Since $\hat{T}$ preserves continuity on $[0,1]$, each $f_n$ is continuous, so we can say that $m_n'$ exists on all of $[0,1]$, and $f_n(x)=(1+x)m_n'(x)$ for all $x\in[0,1]$ and $n\in\N$.

Next, we define $g_n(x)=f_n'(x)$, noting that $f_n\in C^1[0,1]$ for all $n\in\N$. We then have $g_{n+1}(x)=-(Ug_n)(x)$, where $U$ is the operator examined by Wirsing, defined by $U(f')=-(\hat{T}f)'$, and can be shown to be given by
\[(Ug)(x)=\sum_{k=1}^\infty\left(\frac{k}{(k+1+x)^2}\int_{1/(k+1+x)}^{1/(k+x)}g(y)\,dy+\frac{1+x}{(k+x)^3(k+1+x)}g\left(\frac{1}{k+x}\right)\right).
\]
The operator $U$ is clearly positive so that $Ug\leq Uf$ whenever $g\leq f$.


We have $f_1(x)=1/((x+2)\log(4/3))$, and so $g_1(x)=-1/((x+2)^2\log(4/3))$. From the work of Wirsing, $U(-g_1)\leq-\frac{1}{2}g_1$. This can be shown as follows.
Let $a(x)=1/(x+2)^2$, $b(x)=1/(1+2x)^2$, and $c(x)=-1/(2+4x)$ so that $a\leq b$ on $[0,1]$ and $c'=b$. For $x\in[0,1]$, we have
\begin{align*}
(Ua)(x)&\leq(Ub)(x)=-(\hat{T}c)'(x)=\frac{d}{dx}\sum_{k=1}^\infty\frac{1+x}{(k+x)(k+1+x)}\frac{1}{2+4/(k+x)}\\
&=\frac{1}{2}\frac{d}{dx}\sum_{k=1}^\infty\frac{1+x}{(k+1+x)(k+2+x)}=\frac{1}{2}\frac{d}{dx}\sum_{k=1}^\infty\left(\frac{1+x}{k+1+x} - \frac{1+x}{k+2+x}\right)\\
&=\frac{1}{2}\frac{d}{dx}\left(\frac{1+x}{2+x}\right)=\frac{1}{2(2+x)^2}=\frac{1}{2}a(x),
\end{align*}
implying that $g_2 = U(-g_1)\leq-\frac{1}{2}g_1$, and hence, by iterating this procedure and recalling that $g_{n+1}=-Ug_n$, we get that $|g_n|\leq-\frac{1}{2^{n-1}}g_1$.

Now let $\xi=\log(1+x)$ and $\varrho_n(\xi)=r_n(x)$. Then note that
\begin{align*}
\varrho_n''(\xi)&=\frac{d^2}{d\xi^2}r_n(e^\xi-1)=\frac{d}{d\xi}(e^\xi r_n'(e^\xi-1))=e^\xi r_n'(e^\xi-1)+e^{2\xi}r_n''(e^\xi-1)\\
&=(1+x)(r_n'(x)+(1+x)r_n''(x))\\
&=(1+x)\left(m_n'(x)-\frac{1}{(1+x)\log2}+(1+x)\left(m_n''(x)+\frac{1}{(1+x)^2\log2}\right)\right)\\
&=(1+x)(m_n'(x)+(1+x)m_n''(x))=(1+x)\frac{d}{dx}((1+x)m_n'(x))=(1+x)g_n(x).
\end{align*}
We have $r_n(0)=r_n(1)=0$, $\varrho_n(0)=\varrho_n(\log2)=0$, and so by the mean value theorem of divided differences, 
\[\varrho_n(\xi)=-\xi(\log2-\xi)\frac{\varrho_n''(\xi^*)}{2}\]
for some $\xi^*\in[0,\log2]$ depending on $\xi$. Letting $\xi=\log(3/2)$ and taking absolute values yields
\begin{align*}
\left|r_n\left(\frac{1}{2}\right)\right|&\leq\frac{1}{2}\left(\log\frac{3}{2}\right)\left(\log2-\log\frac{3}{2}\right)\|\varrho_n''\|_\infty=\frac{1}{2}\left(\log\frac{3}{2}\right)\left(\log\frac{4}{3}\right)\|(1+x)g_n(x)\|_\infty\\
&=\frac{1}{2^n }\left(\log\frac{3}{2}\right)\left(\log\frac{4}{3}\right)\left\|(1+x)g_1(x)\right\|_\infty\leq\frac{1}{2^n}\log\frac{3}{2}\left\|\frac{1+x}{(x+2)^2}\right\|_\infty=\frac{1}{2^{n+2}}\log\frac{3}{2}.
\end{align*}
If $n\geq2$, this is at most $\frac{1}{16}\log\frac{3}{2}=0.025341\ldots$, which is less than $\frac{\log(4/3)}{\log2}-\frac{\log(10/9)}{\log(4/3)}=0.048798\ldots$ This completes the proof of Lemma \ref{lem:Wirsing}.

\end{proof}

\section{Proof of Theorem \ref{thm:main}}

Without loss of generality, it suffices to prove the theorem if $1 \le k \le m$.

Consider the augmented system $\widetilde{T}$ on $\widetilde{\Omega}$ given by $\mathcal{M} = \{1,2,\dots,m\}$ and $f_a(k) = k+1 \pmod{m}$ for all $a\in \mathbb{N}$. In particular, we always have that
\[
\widetilde{T}^i (x,j) = (T^i x, j+i\bmod{m}).
\]
Also, it is clear that this is transitive: for any rank $n$ cylinder, we have that $\widetilde{T}^n (C_s, j_1) = (\Omega, j_1+n \pmod{m})$. Therefore Theorem \ref{thm:traversing} applies.

Let $x=[a_1,a_2,a_3,\dots]$ be CF-normal, and let $y=[a_k,a_{m+k},a_{2m+k},\dots]$. Consider the string $s=[1,1]$. We want to show that the limiting frequency of $s$ in the digits of $y$ does not equal $\mu(C_s)$. 

Borrowing our notation from the last section, we let $A=C_{[1]}$ and we will now denote $A\cap T^{-n} A$ by $E_n$, so that $C_s = E_1$.

We have that $T^i y \in E_1$ if and only if $T^{mi+k-1} x \in E_m$. Note that $(x,1)$ is normal with respect to $\widetilde{T}$ by Theorem \ref{thm:traversing}. Thus we have that
\begin{align*}
\lim_{n\to \infty}  \frac{\#\{0\le i \le n : T^i y \in C_s\}}{n} &= \lim_{n\to \infty}  \frac{\#\{0\le i \le n : T^{mi+k-1} x \in E_m\}}{n} \\
&= \lim_{n\to \infty} \frac{\#\{ 0 \le i \le mn: \widetilde{T}^i (x,1) \in (E_m, k)\}}{n}\\
&= m\cdot \lim_{n\to \infty} \frac{\#\{ 0 \le i \le mn: \widetilde{T}^i (x,1) \in (E_m, k)\}}{mn}\\
&= m \cdot \tilde{\mu}(E_m,k)  = m\cdot \frac{\mu(E_m)}{m} = \mu(E_m).
\end{align*}

By Lemma \ref{lem:Wirsing}, we have that $\mu(E_m) < \mu(C_s)$, which proves the theorem.

\section{Acknowledgments}

The authors would like to thank Florin Boca for his suggestions.

The research of Joseph Vandehey was supported in part by the NSF grant DMS-1344994 of the RTG in Algebra, Algebraic Geometry, and Number Theory, at the University of Georgia.

\end{document}